\documentclass[a4paper, 12pt, reqno]{amsart} 

\usepackage{amsmath,amsfonts,amssymb,amsthm}
\usepackage{enumerate,enumitem}
\usepackage{xcolor}

\usepackage{geometry}
\usepackage[utf8]{inputenc}
\usepackage[alphabetic,nobysame]{amsrefs}
\newcommand{\citeref}[2]{\cite{#1}*{#2}}

\usepackage[hidelinks]{hyperref} %remove option hidelinks to make the internal links surrounded by a rectangle

\newif\ifdraft
\draftfalse %uncomment this for the final version (arxiv or journal)

\newif\ifarxiv
\arxivfalse %uncomment this in the journal version

\ifdraft % only for debugging
  \usepackage[notcite,notref]{showkeys}
  \else
\fi % only for debugging
\newcommand\E{{\mathbb E}}
\newcommand\Ec[2]{{\E[#1\,|\,#2]}}
\renewcommand\Pr{{\mathbb P}}

\newcommand\norm[1]{\left\lVert #1 \right\rVert}

\newcommand{\R}{\mathbb{R}}

\newcommand{\e}{{\mathrm{e}}} 
\newcommand{\vv}{{\mathbf v}}
\newcommand{\xx}{{\mathbf x}}

\newtheorem{theorem}{Theorem}

\newtheorem{lemma}[theorem]{Lemma}

\author{Tomas Ju\v{s}kevi\v{c}ius}
\address{Institute of Computer Science, Vilnius University, Didlaukio g. 47, 08303 Vilnius, Lithuania}
\email{tomas.juskevicius@gmail.com}
\author{Matas \v{S}ileikis}
\title{Concentration inequalities for vector-valued martingales}
\address{Institute of Computer Science of the Czech Academy of Sciences, Pod Vod\'{a}renskou v\v{e}\v{z}\'{\i} 2, 182 00 Prague, Czechia; Institute of Computer Science, Vilnius University, Didlaukio g. 47, 08303 Vilnius, Lithuania. E-mail: {\tt matas@cs.cas.cz}.} 

\date{\today}

\begin{document}

\begin{abstract}
In this paper, we establish several deviation inequalities for the norms of vector-valued martingales, achieving bounds of the same quality as those available in the classical one-dimensional setting. Our results extend and refine previous work by Pinelis (1994) and Hayes (2005) in this area.
\end{abstract}

\maketitle  

\section{Introduction}

Let $(M_n)_{n\ge 0}$ be a martingale taking values in a Euclidean
space, with $M_0=0$, and suppose that its differences
$X_k := M_k-M_{k-1}$ satisfy $\|X_k\|\leq a_k$ almost surely for
all $k$. As an extension of the classical
Azuma--Hoeffding inequality, Pinelis
\citeref{Pinelis1994martingales}{Theorem 3.5} established the bound
\begin{equation}
    \Pr(\|M_n\|\geq x)
    \leq 2\exp\left(-\frac{x^2}{2\sum_{i=1}^n a_i^2}\right).
    \label{eq:Pinelis}
\end{equation}
The aim of this paper is to refine \eqref{eq:Pinelis} in two
directions.

First, we establish a comparison with the tail of a Gaussian random
variable, yielding a more precise description of the deviation
probabilities.

\begin{theorem}\label{thm:subgaussian_tail}
For every $x\geq 0$,
\[
    \Pr(\|M_n\|\geq x)
    \leq \frac{2\e^3}{9}\Pr(|Z_n|\geq x),
\]
where $Z_n\sim N(0,\sigma_n^2)$ and
$\sigma_n^2=\sum_{i=1}^n a_i^2$.
\end{theorem}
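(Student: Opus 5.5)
The plan is to use the convex-function comparison method of Bentkus and Pinelis. The constant $2\e^3/9$ is exactly the one in Pinelis' bound for the ratio of the tail of a Gaussian to its ``smoothed'' version.

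First, fix the class of test functions
\[
f_t(u)=(u-t)_+^3, \qquad t\in\R,
\]
or more generally functions $f$ with convex second derivative. The goal is the moment comparison
\[
\E f(\|M_n\|)\le \E f(|Z_n|)
\]
for every such $f$ that is nondecreasing on $[0,\infty)$. We take $f(u)=(u-t)_+^3$, extend it to $\R^d$ by $F(\xx)=f(\|\xx\|)$, and proceed by induction on $n$, conditioning on $\mathcal F_{n-1}$. It then suffices to prove the one-step inequality
\[
\E F(\xx+X)\le \E\, g\bigl(\|\xx\|,\,a\varepsilon\bigr)
\]
for a mean-zero vector $X$ with $\|X\|\le a$, where $\varepsilon$ is a Rademacher sign and $g$ plays the role of a function on the plane. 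The trick of Pinelis (1994) is to treat $\R^d$ as a $(2,1)$-smooth Hilbert space, i.e.\ to exploit the identity
\[
\E\|\xx+X\|^2=\|\xx\|^2+\E\|X\|^2 .
\]
This lets one dominate $\E F(\xx+X)$ by $\E F(\xx+a\varepsilon\vv)$ for a unit vector $\vv\perp \xx$, i.e.\ the worst case is a two-point variable orthogonal to the current position. Then one replaces the Rademacher steps by Gaussian steps $a_kG_k$: for functions with convex second derivative, $\E h(y+a\varepsilon)\le \E h(y+aG)$. Gaussian rotation invariance then lets us pass from the orthogonal two-dimensional process back to $\|Z\|$ in the plane. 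Finally we have to reduce the two-dimensional Gaussian of variance $\sigma_n^2$ per coordinate to the one-dimensional $|Z_n|$. This is the delicate point: orthogonal moves only increase the norm through the radial component. So I would instead run the induction directly on the one-dimensional quantity, comparing
\[
\E f\bigl(\sqrt{\|\xx\|^2+\cdots}\bigr)
\]
against $\E f(|r+aG|)$ through a concavity argument in $r^2$.

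Given the comparison $\E(\|M_n\|-t)_+^3\le \E(|Z_n|-t)_+^3$ for all $t$, the tail bound follows by the standard Markov step:
\[
\Pr(\|M_n\|\ge x)\le \inf_{t<x}\frac{\E(|Z_n|-t)_+^3}{(x-t)^3}.
\]
We then invoke the known estimate (Pinelis, Bentkus) that for a symmetric Gaussian this infimum is at most $\frac{2\e^3}{9}\Pr(|Z_n|\ge x)$. By scaling we may take $\sigma_n=1$. The estimate then reduces to a one-variable calculus fact about the standard normal tail, which we would cite or verify.

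The main obstacle is the one-step comparison for the norm in dimension $d\ge 2$. Specifically, we must show that for $f(u)=(u-t)_+^3$ and $\|X\|\le a$, $\E X=0$,
\[
\E f(\|\xx+X\|)\le \E f\bigl(|\,\|\xx\|+aG\,|\bigr)
\]
up to using the symmetric tail $|Z|$. The difficulty is that the norm is not linear, so the tangential part of $X$ must be absorbed. I would first try the following route. Write
\[
\|\xx+X\|^2=(r+\xi)^2+\eta^2,
\]
where $r=\|\xx\|$, $\xi$ is the radial component and $\eta$ the tangential one, with $\xi^2+\eta^2\le a^2$ and $\E\xi=0$. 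The aim is to show that the map $s\mapsto f(\sqrt{s})$ has enough convexity that moving mass from $\eta^2$ into $\xi^2$ only increases the expectation. This reduces everything to the one-dimensional bounded-increment comparison, which is classical.
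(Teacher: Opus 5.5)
The gap is in the step you yourself call the main obstacle. The dimension-free one-step bound is never proved, and the heuristics you offer for it point the wrong way. Your outer framework does match the paper: cubic truncated test functions, Markov's inequality, Gaussian comparison for functions with convex $f''$, and the one-sided estimate giving $2\e^3/9$. Replacing Rademacher steps by Gaussian ones step by step is also fine.

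The problems with the one-step bound are these.
\begin{enumerate}
\item The two-point step orthogonal to $\xx$ is \emph{not} extremal. It makes $\norm{\xx+X}=\sqrt{r^2+a^2}$ deterministic. For $f(u)=u^4$ it gives $(r^2+a^2)^2$, whereas the radial step $X=\pm a\,\xx/r$ gives $(r^2+a^2)^2+4r^2a^2$. The extremal step is the collinear one (Lemma~\ref{lem:single}).
\item Comparing with a planar Gaussian $W$ with variance $\sigma_n^2$ per coordinate would at best give a bound comparable to $\Pr(\norm{W}\ge x)=\e^{-x^2/(2\sigma_n^2)}$. This exceeds $\Pr(|Z_n|\ge x)$ by a factor of order $x/\sigma_n$, so that route cannot give the theorem.
\item Your final decomposition $\norm{\xx+X}^2=(r+\xi)^2+\eta^2$ is the right one, but ``$s\mapsto f(\sqrt s)$ has enough convexity'' is precisely the missing idea.
\end{enumerate}

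What is needed is that $\psi(s):=f(\sqrt s)$ be nondecreasing and convex on $[0,\infty)$, i.e.\ that $f'(u)/u$ be nondecreasing on $(0,\infty)$. This must be derived from the hypotheses. An even convex $f''$ is nondecreasing on $[0,\infty)$, so $f'(u)=\int_0^u f''\le u f''(u)$ (Lemma~\ref{lem:increasing}). With $\rho=\xi^2+\eta^2\le a^2$, monotonicity of $\psi$, then the chord bound for convex $\psi$ on $[(r-a)^2,(r+a)^2]$ together with $\E\xi=0$ and $|\xi|\le a$, give
\[
\E\psi(r^2+2r\xi+\rho)\le\E\psi(r^2+a^2+2r\xi)\le\tfrac12\psi\bigl((r+a)^2\bigr)+\tfrac12\psi\bigl((r-a)^2\bigr)=\E f(r+a\varepsilon).
\]
This would be a valid, dimension-free substitute for the paper's affine majorant on the disk. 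Note that it works by raising $\rho$ to $a^2$ and then spreading $\xi$ to $\pm a$, not by ``moving mass from $\eta^2$ into $\xi^2$''. As written, your proposal does not contain this argument.

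The induction is also under-specified and, as stated, partly false. After one step the test function becomes $r\mapsto\E f(r+a\varepsilon)$, or a Gaussian smoothing of $f$. This is no longer of the form $(u-t)_+^3$, so you must induct over a class closed under that operation: even $C^2$ functions with convex $f''$ (Lemma~\ref{lem:mixture_simple}). Evenness is also used in the one-step bound.

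Your family violates this for $t<0$. On $u\ge0$ you have $(u-t)_+^3=(u+|t|)^3$, whose even extension has a kink at $0$ and $f'(u)/u\to\infty$ as $u\downarrow0$. The comparison you assert ``for all $t$'' is then false. Take $n=1$, $M_1=a\varepsilon e_1$ and $t=-2a$:
\[
\E(\norm{M_1}+2a)^3=27a^3>(14+14\sqrt{2/\pi})\,a^3=\E(|Z_1|+2a)^3 .
\]
Lemma~\ref{lem:inversion_Gaussian} takes the infimum over all $h<x$, including $h<0$. You should therefore either verify the restricted infimum over $t\ge0$ separately, or use the paper's $f_h(t)=(t-h)_+^3+(-t-h)_+^3$. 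That function is even and $C^2$ with convex $f''$ for every real $h$, satisfies $f_h(t)\ge(x-h)^3$ for $t\ge x$, and has $\E f_h(Z_n)=2\E(Z_n-h)_+^3$.
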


Second, we investigate the large-deviation regime, in which $x$ is
of order $n$ when the bounds $a_i$ remain of constant order. In this
regime, a Gaussian comparison does not capture the correct
large-deviation rate. In particular, the Gaussian tail has a
quadratic exponent, whereas the relevant rate in the large-deviation
regime is described by a different function. Our second result
 provides a bound that captures this rate. The exponent is asymptotically sharp, which follows by considering a rescaled sum of i.i.d. Rademacher random variables.

\begin{theorem}\label{thm:largedeviation_tail}
Suppose that $a=\sqrt{\frac{1}{n}\sum_{i=1}^n a_i^2}>0$. For $0\leq x\leq na$, we have
\[
    \Pr(\|M_n\|\geq x)
    \leq
    2\exp\left(
        -{n}h\left(\frac{x}{na}\right)
    \right),
\]
where $h$ is a continuous function defined by %did not specify the domain so that pedantic person does not argue that it's undefined for $u = 0, 1$ and understands it is extended to ends by continuity
\[
    h(u)=\tfrac 12[(1+u)\ln(1+u)+(1-u)\ln(1-u)].
\]

\end{theorem}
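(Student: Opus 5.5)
Our plan is to follow Pinelis' approach: reduce the vector-valued problem to a one-dimensional supermartingale argument with a smooth even test function of the norm, and use the convexity bound that is sharp for Rademacher variables. The target is the Hoeffding-type estimate
\[
    \E\cosh(\lambda\|M_n\|)\le \prod_{i=1}^n \cosh(\lambda a_i), \qquad \lambda\ge 0.
\]
Given this, Markov's inequality and $\cosh t\ge \e^{t}/2$ give
\[
    \Pr(\|M_n\|\ge x)\le 2\e^{-\lambda x}\prod_{i=1}^n\cosh(\lambda a_i).
\]

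The main step, and the expected obstacle, is the one-step inequality
\[
    \Ec{\cosh(\lambda\|v+X\|)}{\mathcal F}\le \cosh(\lambda\|v\|)\cosh(\lambda a)
\]
for a fixed vector $v$ and a conditionally mean-zero $X$ with $\|X\|\le a$. We intend to follow Pinelis's $(2,1)$-smoothness argument for Hilbert spaces. Set $u(t)=\Ec{\cosh(\lambda\|v+tX\|)}{\mathcal F}$. Then $u'(0)=0$, because $\cosh(\lambda\|\cdot\|)$ is differentiable (it is even and analytic in $\|y\|^2$) and $\E X=0$. For the second derivative we compute the Hessian of $y\mapsto\cosh(\lambda\|y\|)$ at $y\neq0$. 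In the radial direction it equals $\lambda^2\cosh(\lambda\|y\|)$. In tangential directions it equals $\lambda\sinh(\lambda\|y\|)/\|y\|\le\lambda^2\cosh(\lambda\|y\|)$. Hence
\[
    u''(t)\le \lambda^2 a^2\, \E\cosh(\lambda\|v+tX\|)=\lambda^2a^2u(t).
\]
Comparison for the ODE $u''\le\lambda^2a^2u$ with $u'(0)=0$ then gives $u(1)\le u(0)\cosh(\lambda a)$. Concretely, $w(t)=u(t)-u(0)\cosh(\lambda a t)$ satisfies $w''\le \lambda^2a^2 w$ with $w(0)=w'(0)=0$, and a Gronwall-type argument gives $w\le 0$. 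Iterating over $k$ with the tower property yields the product bound.

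It remains to optimise over $\lambda$. This is exactly the one-dimensional Hoeffding calculation. By the inequality $\ln\cosh\sqrt{s}$ concave in $s$ (equivalently, Hoeffding's comparison of $\prod\cosh(\lambda a_i)$ with $\cosh(\lambda a)^n$, where $a$ is the quadratic mean), we get
\[
    \prod_{i=1}^n\cosh(\lambda a_i)\le\cosh(\lambda a)^n .
\]
Concavity of $s\mapsto \ln\cosh\sqrt s$ follows from $\tanh t\le t$ together with a short derivative check, and Jensen's inequality then applies. Putting $u=x/(na)$ and minimising $-\lambda x+n\ln\cosh(\lambda a)$ gives $\tanh(\lambda a)=u$. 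The minimum equals $-n h(u)$, since this is the Legendre transform of $\ln\cosh$. The endpoints $u=0$ and $u=1$ follow by continuity, or by taking $\lambda\to\infty$. This yields the claimed bound $2\exp(-nh(x/(na)))$.
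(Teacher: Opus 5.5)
Your proof is correct. Its skeleton matches the paper's: a one-step bound, the tower property, the comparison of $\prod_i\cosh(\lambda a_i)$ with $\cosh^n(\lambda a)$, then Chernoff and the Legendre transform. The difference is in how you prove the key one-step inequality $\E\cosh(\lambda\norm{v+X})\le\cosh(\lambda\norm{v})\cosh(\lambda a)$.

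The paper obtains it as the case $f=\cosh(\lambda\,\cdot)$ of Lemma~\ref{lem:single}, since $\E f(\norm{v}+a\xi)=\cosh(\lambda\norm{v})\cosh(\lambda a)$. That lemma dominates $x\mapsto f(\norm{v+x})$ on the disk $B_a$ by the affine function agreeing with it at $(\pm a,0)$. The paper then runs the induction of Theorem~\ref{thm:moments}, after the Kallenberg--Sztencel reduction to $\R^2$.

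You instead use Pinelis's Hessian bound and the differential inequality $u''\le c^2u$, $u'(0)=0$, with $c=\lambda a$. Your comparison step is rigorous by variation of constants: $w(0)=w'(0)=0$ gives $w(t)=\int_0^t c^{-1}\sinh(c(t-s))\,(w''-c^2w)(s)\,ds\le 0$. You can avoid conditional-expectation technicalities by running the same argument pathwise for $t\mapsto\cosh(\lambda\norm{v+tX})$, keeping the linear term $\phi'(0)\sinh(c)/c$. That term has conditional mean zero.

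Your route is dimension-free and self-contained: it works verbatim in any Hilbert space, with no reduction to $\R^2$. The cost is generality. The differential inequality closes only because $f''=\lambda^2 f$ for $f=\cosh(\lambda\,\cdot)$. The paper's lemma covers every even convex $f$ with convex $f''$. That generality is exactly what Theorem~\ref{thm:subgaussian_tail} needs, with $f_h(t)=(t-h)_+^3+(-t-h)_+^3$, where your ODE argument does not close.

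Your Jensen argument likewise replaces the citation of Eaton by an elementary proof, with one small correction. Concavity of $s\mapsto\ln\cosh\sqrt{s}$ is equivalent to $t\mapsto\tanh t/t$ being decreasing on $(0,\infty)$, i.e.\ to $2t\le\sinh 2t$ (or to concavity of $\tanh$ on $[0,\infty)$). It does not follow from $\tanh t\le t$.
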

For $x>na$, the probability on the left-hand side is zero, since $\|M_n\|\leq\sum_{i=1}^n a_i\leq na$ almost surely.

The paper is organized as follows. Section~2 discusses the main
obstacles to extending one-dimensional deviation inequalities to
higher-dimensional martingales. Section~3 contains the technical
lemmas needed in the proofs of our main results. Finally,
Section~4 is devoted to the proofs of
Theorems~\ref{thm:subgaussian_tail} and
\ref{thm:largedeviation_tail}.

\section{Difficulties in higher dimensions}

The deviation inequalities discussed in the introduction rely on
the possibility of comparing suitable functions of the norm of a
vector-valued martingale with the corresponding functions of a
one-dimensional weighted sum of Rademacher random variables. More precisely, for an appropriate
parametrized family of functions $\{f_h\}$, one seeks to establish
the inequality
\begin{equation}
    \mathbb{E} f_h(\|M_n\|)
    \leq
    \mathbb{E} f_h\left(
        \left|\sum_{i=1}^n a_i\xi_i\right|
    \right),
    \label{1d}
\end{equation}
where $\xi_1,\dots,\xi_n$ are independent Rademacher random
variables, that is,
$\Pr(\xi_i=1)=\Pr(\xi_i=-1)=1/2$.
Such a comparison shows that the relevant extremal bounds for
vector-valued martingales can be obtained from a suitable
one-dimensional example.

The classical exponential inequalities are derived by considering
the family $f_h(t)=e^{ht}$, with $h\geq0$. However, as observed by
Hayes \cite{Hayes2005}, the quantity
\[
    \mathbb{E}e^{h\|M_n\|}
\]
is not maximized by one-dimensional martingales. This motivates the search
for other classes of functions for which the comparison
\eqref{1d} remains valid.

A useful choice is provided by functions whose Taylor expansions
contain only even powers. Indeed, the moments
$\mathbb{E}\|M_n\|^{2k}$, for $k\in\mathbb{N}$, are uniformly
bounded by the corresponding moments of the one-dimensional
Rademacher sum. This moment comparison provides a route to
establishing deviation inequalities for $\|M_n\|$ that retain the
quality of their one-dimensional counterparts. A similar route was taken in \cite{Pinelis1994martingales} where he used the functions $$f_{h}(t)=\cosh(ht)=\sum_{k=0}^{\infty}\frac{(ht)^{2k}}{(2k)!}.$$ to establish \eqref{eq:Pinelis}. 

\bigskip

We should also like to mention that we make use of a result by Kallenberg and Sztencel \cite{kallenberg} to reduce the problem to two dimensions. We note that for our proofs this reduction is not essential, but it somewhat simplifies the notation and is worth mentioning in the context.

\section{Inequalities for convex functions of the norm in \texorpdfstring{$\mathbb{R}^2$}{R²}}

The goal of this section is to establish some elementary lemmas that are needed to establish the main technical statement of the paper, that is, Theorem \ref{thm:moments}. We shall deal only with martingales in $\mathbb{R}^2$ as in the end the reduction to this case will follow from a result by Kallenberg and Sztencel \cite{kallenberg}. The latter reduction is discussed in Section 4 before the proofs of our concentration inequalities.

The following statement is the main comparison inequality from which we will derive Theorems \ref{thm:subgaussian_tail} and \ref{thm:largedeviation_tail}. 
Let $\xi_1, \dots, \xi_n$ be independent Rademacher random variables. By $B_{r} \subset \R^2$ we denote the closed disk of radius $r$ centered at the origin.
\begin{theorem}
  \label{thm:moments}
  Let $(M_n)_{n \ge 0}$ be a $\R^2$-valued martingale with $M_0 = 0$ such that every difference $X_i = M_i - M_{i-1}, i = 1, \dots, n$, satisfies inequality $\norm{X_i} \le a_i$ a.s. Let $f : \R \to \R$ be an even, twice continuously differentiable convex function whose second derivative $f''$ is convex. We have
    \begin{equation}
      \E f(\norm{M_n}) \le \E f(a_1\xi_1 + \dots + a_n\xi_n).
    \end{equation}
\end{theorem}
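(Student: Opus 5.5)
Our plan is a backward induction on the number of steps, using a value function. For $r\ge 0$ and $k=0,1,\dots,n$ put
\[
  u_k(r) = \E f\Bigl(r + a_{k+1}\xi_{k+1}+\dots+a_n\xi_n\Bigr),
\]
so $u_n = f$. Since $f$ is even and the $\xi_i$ are symmetric, each $u_k$ is again even. Moreover $u_k$ is $C^2$ and convex, with convex second derivative, because these properties are preserved under translations and averaging. The claim is $\E f(\norm{M_n})\le u_0(0)$. It suffices to show that for every $k$,
\begin{equation}\label{eq:onestep}
  \Ec{u_k(\norm{M_{k-1}+X_k})}{\mathcal F_{k-1}} \le u_{k-1}(\norm{M_{k-1}}) .
\end{equation}
Chaining these conditional expectations from $k=n$ down to $k=1$ then gives the result.

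Inequality \eqref{eq:onestep} reduces to a deterministic one-step lemma. Let $g$ be even, $C^2$ and convex with $g''$ convex. Let $m\in\R^2$ with $\norm m=r$, and let $X$ be a random vector with $\E X=0$ and $\norm X\le a$. We must show
\[
  \E g(\norm{m+X})\le \tfrac12\bigl(g(r+a)+g(r-a)\bigr).
\]
We first reduce $X$ to the circle. Conditionally on its direction, a point of $B_a$ is a mean of points on $\partial B_a$, and the function $x\mapsto g(\norm{m+x})$ is convex because $g$ is even, convex and hence nondecreasing on $[0,\infty)$. So it is enough to control $\E g(\norm{m+X})$ when $X$ takes values on the circle of radius $a$. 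The mean-zero constraint must be preserved in this step, so we use a dilation argument: replace each value $x$ of $X$ by the two points on $\partial B_a$ on the line through $x$ and $0$, with weights that keep the mean of $X$ equal to $0$.

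Next we parametrize $X=a(\cos\theta,\sin\theta)$, with $m$ placed along the first axis. Then
\[
  \norm{m+X}^2 = r^2+a^2+2ar\cos\theta .
\]
Write $s=\cos\theta\in[-1,1]$ and $\phi(s)=g\bigl(\sqrt{r^2+a^2+2ars}\bigr)$. The mean-zero condition gives $\E\cos\theta=0$, that is, $\E s=0$. The second coordinate condition $\E\sin\theta=0$ we simply drop, which only enlarges the supremum. If $\phi$ is convex on $[-1,1]$, then the maximum of $\E\phi(s)$ over mean-zero laws on $[-1,1]$ is attained at $s=\pm1$ with probability $1/2$ each. That value is exactly $\tfrac12(g(r+a)+g(|r-a|))$, and $g(|r-a|)=g(r-a)$ because $g$ is even, which is what we need.

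The main obstacle is the convexity of $\phi(s)=g(\sqrt{c+bs})$ with $b=2ar$ and $c=r^2+a^2$. Equivalently, we need $\psi(t)=g(\sqrt t)$ to be convex in $t\ge 0$ (for $t\ge (r-a)^2$ suffices). Computing,
\[
  \psi''(t)=\frac{1}{4t}\Bigl(g''(\sqrt t)-\frac{g'(\sqrt t)}{\sqrt t}\Bigr),
\]
so we need $g''(\rho)\ge g'(\rho)/\rho$ for $\rho>0$. Here the hypothesis on $g''$ enters. Since $g'(0)=0$ by evenness, we have $g'(\rho)=\int_0^\rho g''$, so the needed inequality says $g''(\rho)$ dominates its average over $[0,\rho]$. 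This holds because $g''$ is even and convex, hence nondecreasing on $[0,\infty)$. We also check that the value function $u_k$ inherits all the hypotheses needed to apply this lemma with $g=u_k$, which was the purpose of recording them at the start. If the dilation step onto the circle turns out to be delicate, we will instead treat $\norm X=\rho\le a$ directly. For fixed $\rho$ the same argument gives the bound $\tfrac12(g(r+\rho)+g(r-\rho))$, and this is nondecreasing in $\rho$ by convexity of $g$ together with evenness.
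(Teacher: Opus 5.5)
Your proposal is correct and is essentially the paper's proof: your backward value-function chain is the unrolled form of the paper's induction with $f_{a_n}$; your dilation onto the circle replaces the paper's remark that the convex function $D$ attains its maximum over the disk on the boundary; and your convexity of $\phi$ via $g''(\rho)\ge g'(\rho)/\rho$ is exactly the convexity of $\Delta$ obtained from Lemma~\ref{lem:increasing}. One caveat is that your fallback of fixing $\|X\|=\rho$ would not work as stated, because the conditional law of $X$ given $\|X\|=\rho$ need not be centered; your main dilation argument does not rely on it.
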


We defer the proof of Theorem \ref{thm:moments} to the end of this section as we need to develop some useful lemmas before providing this proof. The core Lemma~\ref{lem:single} asserts that among random vectors shifted by a fixed vector $\vv$, the $f$-moment is maximized by a two-point distribution collinear with $\vv$. 

%\begin{lemma}
%  \label{lem:conv_even}
%    If a random variable $X$ has a symmetric distribution, that is $X  \overset{d}{=} -X$, and $\psi : \R \to \R$ is an even convex function then
%    \[
%      f(t) := \E \psi(t + X)
%    \]
%    is convex and even.
%\end{lemma}
%\begin{proof}
%Since \(X\overset{d}{=} -X\) and $\psi$ is even, for any $t \in \R$
%\[
%f(-t) = \E \psi (-t + X) = \E \psi(t - X) =\E \psi(t+X)=f(t).
%\]
%Therefore  \(f\) is even.
%
%For convexity, let \(t_1,t_2\in\mathbb{R}\) and let \(\lambda\in[0,1]\). For every  value of \(X\) the map \(t\mapsto \psi(t+X)\) is convex. Hence 
%\[
%\psi\big(\lambda t_1+(1-\lambda)t_2 + X\big)
%\le \lambda\,\psi\big(t_1+X\big) + (1-\lambda)\,\psi\big(t_2+X\big).
%\]
%Taking expectations yields
%\[
%\mathbb{E}\psi(\lambda t_1+(1-\lambda)t_2 + X)
%\le \lambda\,\mathbb{E}\psi(t_1+X) + (1-\lambda)\,\mathbb{E}\psi(t_2+X).
%\]
%That is,
%\[
%f\big(\lambda t_1+(1-\lambda)t_2\big)\le \lambda f(t_1) + (1-\lambda) f(t_2),
%\]
%so \(f\) is convex.
%
%\end{proof}

\begin{lemma}
  \label{lem:increasing}
  If $f : \R \to \R$ is an even, twice continuously differentiable function whose second derivative $f''$ is convex, then function $h(t) = f'(t)/t$ is increasing on $(0,\infty)$. %the proof gives that if $(-a,a)$ is the maximal interval on which $f''$ is constant, then $f(t)/t$ is constant on $[-a,a]$ and strictly increasing on $[a, \infty)$
\end{lemma}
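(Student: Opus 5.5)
Since $f$ is even and $C^1$, $f'$ is odd, so $f'(0)=0$. By the fundamental theorem of calculus and the substitution $s=tu$,
\[
  h(t) = \frac{f'(t)}{t} = \frac{1}{t}\int_0^t f''(s)\,ds = \int_0^1 f''(tu)\,du, \qquad t>0.
\]
This writes $h(t)$ as the average of $f''$ over the interval $[0,t]$. The plan is to show that for each fixed $u\in[0,1]$ the map $t\mapsto f''(tu)$ is nondecreasing on $(0,\infty)$. Integrating over $u$ then shows that $h$ is nondecreasing. Here ``increasing'' is meant in the weak sense; strictness can fail, for instance for $f(t)=t^2$.

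The key step is to show that $f''$ is nondecreasing on $[0,\infty)$. The function $f''$ is even, because differentiating $f(-t)=f(t)$ twice gives $f''(-t)=f''(t)$. It is also convex by hypothesis. An even convex function $g$ satisfies $g(0)\le \tfrac12(g(s)+g(-s)) = g(s)$ for every $s$. For $0\le s_1<s_2$, write $s_1$ as a convex combination of $0$ and $s_2$; convexity then gives
\[
  g(s_1)\le \max(g(0),g(s_2)) = g(s_2).
\]
So $g=f''$ is nondecreasing on $[0,\infty)$. Since $tu$ is nondecreasing in $t$ for each fixed $u\ge 0$, the map $t\mapsto f''(tu)$ is nondecreasing, and the integral representation gives the claim.

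There is no real obstacle. The only point needing care is $f'(0)=0$, which follows from oddness of $f'$, together with the reduction of $h$ to the averaged form. If a finer statement is wanted, matching the authors' remark, one can note the following. If $f''$ is constant on a maximal interval $[-a,a]$, then it is strictly increasing on $[a,\infty)$, since a convex function that is nondecreasing and nonconstant beyond a point increases strictly from there. The average $h$ is then constant on $(0,a]$ and strictly increasing afterwards. This refinement is not needed for the lemma as stated.
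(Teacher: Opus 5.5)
Your proof is correct and follows essentially the same route as the paper. Both arguments use $f'(0)=0$, which comes from $f'$ being odd, and the fact that the even convex function $f''$ is nondecreasing on $[0,\infty)$. The only difference is the last step: the paper bounds $f'(t)\le t f''(t)$ and concludes $h'(t)\ge 0$, while you write $h(t)=\int_0^1 f''(tu)\,du$ and use monotonicity of the integrand directly (and you also justify the monotonicity of $f''$ explicitly, which the paper only asserts).
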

\begin{proof}
  Since $f$ is even, $f'$ is odd, so $f'(0) = 0$. Moreover, $f''$ is even and, by assumption, convex and therefore increasing on $[0, \infty)$.
    Consequently for $t \ge 0$
    \[
      f'(t) = \int_0^t f''(u) du \le t f''(t).
    \]
    Therefore, for $t > 0$ we have
    \begin{equation}
      h'(t) = \frac{tf''(t) - f'(t)}{t^2} \ge 0, 
    \end{equation}
    concluding that $h$ is increasing on $(0, \infty)$.
\end{proof}
\begin{lemma}
  \label{lem:mixture_simple}
  Let $f : \R \to \R$ be an even, twice continuously differentiable convex function whose second derivative $f''$ is convex. Let $\xi$ be a Rademacher random variable. Given a real number $r$, define the function
    \begin{equation}
      \label{eq:def_fr}
      f_r (t) = \E f(t + r\xi) .
    \end{equation}
    Then the function $f_r$ is even and convex, with $f_r''$ being convex.
\end{lemma}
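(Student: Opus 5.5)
Write $f_r(t) = \tfrac12\bigl(f(t+r) + f(t-r)\bigr)$, using that $\xi$ takes the values $\pm 1$ with probability $1/2$ each. The plan is to verify the three properties one at a time. Each should follow from the fact that translation and averaging preserve evenness and convexity, and that differentiation commutes with this finite average.

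\emph{Evenness.} Since $f$ is even,
\[
  f_r(-t) = \tfrac12\bigl(f(-t+r)+f(-t-r)\bigr) = \tfrac12\bigl(f(t-r)+f(t+r)\bigr) = f_r(t).
\]
Equivalently, $\xi \overset{d}{=} -\xi$, so $\E f(-t+r\xi)=\E f(t-r\xi)=\E f(t+r\xi)$.

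\emph{Convexity.} For each fixed value $s\in\{r,-r\}$, the map $t\mapsto f(t+s)$ is a translate of the convex function $f$, so it is convex. A convex combination of convex functions is convex, hence $f_r$ is convex.

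\emph{Regularity and convexity of $f_r''$.} Because $f\in C^2$ and $f_r$ is a finite sum of translates of $f$, we get $f_r\in C^2$ with
\[
  f_r''(t) = \tfrac12\bigl(f''(t+r)+f''(t-r)\bigr).
\]
By hypothesis $f''$ is convex, so each translate $t\mapsto f''(t\pm r)$ is convex. Their average $f_r''$ is therefore convex. We record that $f_r$ is twice continuously differentiable as well, so that $f_r$ satisfies all the hypotheses of Lemma~\ref{lem:increasing} and of this lemma. This is what allows the lemma to be iterated, for example replacing $f$ by $f_{r}$ and then by $(f_r)_{r'}$.

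There is no real obstacle here. The only point needing care is to write the expectation explicitly as a two-term average, so that differentiating under the expectation is trivially justified. One should also note that $r$ may be any real number, including $r=0$ or $r<0$, and that the argument does not depend on its sign.
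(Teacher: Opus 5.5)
Your proof is correct and matches the paper's argument: you write $f_r(t)=\tfrac12 f(t+r)+\tfrac12 f(t-r)$, obtain evenness from $f(-t\pm r)=f(t\mp r)$, and obtain convexity of $f_r$ and of $f_r''(t)=\tfrac12 f''(t+r)+\tfrac12 f''(t-r)$ because nonnegative combinations of convex translates are convex. Your explicit note that $f_r$ stays twice continuously differentiable is a useful small addition, since the induction in the proof of Theorem~\ref{thm:moments} needs $f_{a_n}$ to satisfy all the hypotheses again.
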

\begin{proof}
  Since $f_r(t) = \frac{1}{2} f(t + r) + \frac{1}{2} f(t - r)$, the evenness of $f_r$ follows from $f(-t \pm r) = f(t \mp r)$. Furthermore, $f_r$ and its second derivative $f_r''(t) = \frac{1}{2} f''(t + r) + \frac{1}{2} f''(t - r)$ are convex as non-negative linear combinations of convex functions.
\end{proof}
\begin{lemma} 
  \label{lem:single}
    Let $r \geq 0$ and consider a zero-mean random vector $X$ taking values in the closed ball $B_r \subset \R^2$. Let $f : \R \to \R$ be an even, twice continuously differentiable convex function whose second derivative $f''$ is convex. Let $\vv \in \R^2$ be an arbitrary deterministic vector and $\xi$ a Rademacher random variable. 
    \begin{equation}
      \label{eq:single}
      \E f (\norm{\vv + X}) \le \E f \left(\norm{\vv} + r\xi\right) %= \E f \left(\norm{\vv\left(1 + \tfrac{r}{\norm{\vv}} \xi\right)}\right).
    \end{equation}
    %warning: the commented-out claims fail at some specific cases, consider $f(t) = t^2$
%    If $\vv$ is nonzero, equality holds whenever $X$ is concentrated on $
%    \{\pm \frac{r}{\norm{\vv}}\vv\}$.
%
%    If $\vv$ is zero then the equality is attained by any $X$ taking values on the circle of radius $r$ around zero.
\end{lemma}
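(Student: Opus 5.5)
The plan is to reduce the two-dimensional problem to a one-dimensional convexity statement along the direction of $\vv$, using Lemma~\ref{lem:increasing} for the convexity.

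Write $s=\norm{\vv}\ge 0$ and choose a unit vector $\mathbf u$ with $\vv=s\mathbf u$ (any unit vector if $s=0$). Decompose $X=X_1\mathbf u+X_2\mathbf u^\perp$, so that $X_1=\langle X,\mathbf u\rangle\in[-r,r]$ and $\E X_1=0$. Then
\[
\norm{\vv+X}^2=s^2+2sX_1+\norm{X}^2\le s^2+2sX_1+r^2 .
\]
Since $f$ is even and convex, it is nondecreasing on $[0,\infty)$. Hence
\[
f(\norm{\vv+X})\le \psi(X_1),\qquad\text{where } \psi(t):=f\bigl(\sqrt{s^2+2st+r^2}\bigr),\quad t\in[-r,r].
\]
The argument of the square root is well defined, because $s^2+2st+r^2\ge (s-r)^2\ge 0$ for $|t|\le r$.

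The key step is to show that $\psi$ is convex on $[-r,r]$. Since $\psi(t)=F(s^2+r^2+2st)$ with $F(q):=f(\sqrt q)$, and $t\mapsto s^2+r^2+2st$ is affine, it suffices to show that $F$ is convex on $[0,\infty)$. For $q>0$,
\[
F'(q)=\frac{f'(\sqrt q)}{2\sqrt q}=\tfrac12\,h(\sqrt q),
\]
where $h(t)=f'(t)/t$. By Lemma~\ref{lem:increasing}, $h$ is increasing on $(0,\infty)$, so $F'$ is nondecreasing on $(0,\infty)$. Together with continuity of $F$ at $0$, this gives convexity of $F$ on $[0,\infty)$.

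Finally, apply the chord bound to the convex function $\psi$ on $[-r,r]$. For $t\in[-r,r]$,
\[
\psi(t)\le \tfrac{r-t}{2r}\psi(-r)+\tfrac{r+t}{2r}\psi(r).
\]
Take expectations and use $\E X_1=0$ to obtain $\E\psi(X_1)\le\frac12\psi(r)+\frac12\psi(-r)$. (If $r=0$ then $X=0$ and the claim is trivial.) At the endpoints, $\psi(\pm r)=f(|s\pm r|)=f(s\pm r)$ by evenness of $f$. The right-hand side is therefore exactly $\E f(\norm{\vv}+r\xi)$, which gives \eqref{eq:single}.

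The only genuinely nontrivial point is the convexity of $q\mapsto f(\sqrt q)$, and this is supplied by Lemma~\ref{lem:increasing}. The remaining step to check is the monotonicity used to replace $\norm{X}^2$ by $r^2$, which relies only on $f$ being even and convex.
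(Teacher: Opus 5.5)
Your proof is correct and is essentially the paper's argument. Both proofs rest on the convexity of $t\mapsto f\bigl(\sqrt{\norm{\vv}^2+r^2+2\norm{\vv}t}\bigr)$ on $[-r,r]$: its derivative is $\norm{\vv}\,h(\cdot)$ with $h(t)=f'(t)/t$ nondecreasing by Lemma~\ref{lem:increasing}, and one then uses the chord through $t=\pm r$ together with $\E X_1=0$. The only difference is how you pass from the disk to the circle: you do it pointwise via the monotonicity of $f$ on $[0,\infty)$ and $\norm{X}^2\le r^2$, whereas the paper normalizes $r=1$ and shows that $\xx\mapsto f(\norm{\vv+\xx})$ minus the affine interpolant $l$ is convex, hence maximized on the boundary circle. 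Your version of that step is slightly more direct.
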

\begin{proof}
    If $r=0$, then $X=0$ almost surely, and the assertion holds with equality. Hence we may assume that $r>0$.

    We claim that it is enough to prove the lemma in the case $r = 1$. Defining $f^*(t) = f(rt)$ we note that $f^*$ inherits the properties of $f$. Writing $\vv^* = \vv/r$ and $X^* = X/r$, we have that $X^*$ takes values in $B_1$ and therefore
\[
  \E f(\norm{\vv + X}) = \E f^*(\norm{\vv^* + X^*}) \le \E f^*({\norm{\vv^*} + \xi}) = \E f({\norm{\vv} + r\xi}).
\]
Since the Euclidean norm is invariant under rotations, we can w.l.o.g.\ assume that $\vv = (v, 0)$ with $v = \norm{\vv}$. Our goal is to show that the function $\xx \mapsto f(\norm{\vv + \xx})$ is, pointwise on $B_1$, dominated by the affine function 
\[
  l(\xx) = \alpha x_1 + \beta,
\]
where $\alpha$ and $\beta$ are chosen so that $l(\xx) = f(\norm{\vv + \xx})$ at points $(-1,0)$ and $(1,0)$, that is
\[
    \alpha = \tfrac{1}{2} \left( f(v+1) - f(v-1) \right), \qquad \beta = \tfrac{1}{2} \left( f(v+1) + f(v-1) \right) = \E f(v + \xi) \,.
\]
By assumption, function $f$ is convex and increasing on $[0, \infty)$. Since the norm is a convex function on $\R^2$, it easily follows that $\xx \mapsto f(\norm{\vv + \xx})$ is convex as well. Consequently, the difference $D(\xx) := f(\norm{\vv + \xx}) - l(\xx)$ is a convex function on $\R^2$. 

A fundamental property of convex functions is that on any compact convex set, such as the unit disk $B_1$, the maximum is attained on the boundary. Therefore, to prove that $D(\xx) \le 0$ for all $\xx \in B_1$, it is sufficient to show that the maximum of $D(\xx)$ on the unit circle is zero. By symmetry, it is enough to consider the upper semicircle.

If $v = 0$, then trivially $D(\xx) = f(1) - f(1) = 0$ on the whole semicircle. Let us assume that $v > 0$ for the rest of the proof that $D(\xx) \le 0$.
    
    We parametrize $\xx = (u, \sqrt{1 - u^2})$, $u \in [-1,1]$ so that
    \[
      l(\xx) = \alpha u + \beta, \quad \text{and} \quad \norm{\vv + \xx} = \sqrt{(u + v)^2 + 1 - u^2} = \sqrt{v^2 + 1 + 2vu} =: g(u).
    \]

    Note that $v > 0$ implies that $g$ strictly increases from $g(-1) = |v-1|$ to $g(1) = v + 1$ with derivative
    \[
      g'(u) = \frac{v}{g(u)} > 0, \qquad u \in (-1,1).
    \]

    We define a function $\Delta : [-1, 1] \to \R$ by setting
    \[
      \Delta(u) := f(\norm{\vv + \xx}) - l(\xx) = f(g(u)) - \alpha u - \beta.
    \]
    Our goal is to show that $\Delta \le 0$.
    Since $g(u)$ is a continuous function with values in a bounded interval (say $[0, v + 1]$), function $\Delta$ is continuous on $[-1,1]$.
    Observe that $\Delta(1) = f(v+1) - f(v+1) = 0$, and, using the evenness of $f$
    \[
      \Delta(-1) = f(|v-1|) + \alpha - \beta = f(|v-1|) - f(v-1) = 0.
    \]
    Moreover, $\Delta$ is continuously differentiable on $(-1,1)$. Namely, for $u \in (-1, 1)$,
    \begin{align*}
       \Delta'(u) = (f( g(u) ) - \alpha u - \beta)'= f'(g(u))g'(u) - \alpha = \frac{vf'(g(u))}{g(u)} - \alpha.
    \end{align*}
    Denoting  $h(t) = f'(t)/t$, we have shown
    \begin{equation}
        \Delta'(u) = vh(g(u)) - \alpha, \qquad u \in (-1,1).
    \end{equation}
    By Lemma~\ref{lem:increasing} function $h$ is increasing on $(0,\infty)$ and, recalling that $g$ is strictly positive and  increasing on $(-1,1)$, we conclude that $\Delta'$ is increasing on $(-1,1)$. Together with continuity of $\Delta$ this implies that $\Delta$ is convex on $[-1,1]$. Recalling that $\Delta(-1) = \Delta(1) = 0$, we conclude that $\Delta \le 0$, implying that $l \ge f(\norm{\vv + \cdot})$ on the disk $B_1$ as discussed above.

    Now, recalling that $X$ has zero mean, we conclude that the inequality in \eqref{eq:single} holds:
    \[
    \E f(\norm{\vv + X}) \le \E l(X) = \alpha \E X_1 + \beta = \alpha \cdot 0 + \beta = \E f({v + \xi}) = \E f({\norm{\vv} + \xi}),
    \]
    which proves inequality~\eqref{eq:single}.
%    Finally, the equality in \eqref{eq:single} holds, because using
%    \(
%      \bars{\norm{\vv} + r\xi} = \bignorm{ \vv + \tfrac{r}{\norm{\vv}}\vv\xi }
%    \)
%    and the evenness of $f$, we obtain
%    \[
%       f\left( \norm{\vv} + r\xi \right) = f\left( \bars{\norm{\vv} + r\xi} \right) = f\left(\bignorm{ \vv + \tfrac{r}{\norm{\vv}}\vv\xi }  \right).
%    \]
%
%    Next suppose that the equality in \eqref{eq:single} is attained. Then $X$ is supported on the set of maximizers of $\Delta$. If $\vv$ is nonzero, that is, $v > 0$, then $g$ is strictly increasing. Condition that $f'(t)/t$ is increasing implies that $f$ is strictly increasing as well, so the set of maximizers consists of the points $(v \pm 1, 0)$.
%
%    If $v = 0$, then $\Delta'(u) \equiv -\alpha$, a constant, which means that the set of maximizers is the whole circle meaning that any $X$ taking values on the circle attains equality.
\end{proof}
%In the more general setting where martingale difference $X_i$ takes values in the ball of radius $r_i$, we define
%\[
%  f(t) = f_{d,k}(t) := \textstyle \E \left|t + \sum_{i = 1}^k r_i\xi_i\right|^d = 2^{-k} \sum_{I \subseteq [k]} \bars{t + \sum_{i \in I} r_i - \sum_{i \notin I} r_i }^d 
%\]
%Like before we get that $f_{d,k}'' = (d)_2 f_{d-2,k}$, so the fourth derivative is nonnegative and the third derivative is odd (in particular $f^{'''}(0) = 0$. 

\begin{proof}[Proof of Theorem~\ref{thm:moments}]
  We proceed by induction on $n$. For the base case $n = 0$, since $M_0 = 0$ and an empty sum equals $0$, both sides evaluate to $f(0)$, so equality holds.

  For $n \ge 1$, since $X_n$ has conditional mean zero given $M_{n-1}$ and satisfies $\norm{X_n}\leq a_n$ almost surely, Lemma~\ref{lem:single} applies conditionally on $M_{n-1}$. Apply the lemma with $\vv = M_{n-1}$, $X = X_n$, and $r = a_n$. This yields
  \[
    \Ec{f(\norm{M_n})}{M_{n-1}} = \Ec{f(\norm{M_{n-1} + X_n})}{M_{n-1}} \le f_{a_n}(\norm{M_{n-1}}),
  \]
  where $f_{a_n}(t) = \E f(t + a_n\xi_n)$ as defined in \eqref{eq:def_fr}. Taking expectations on both sides gives
  \[
    \E f(\norm{M_n}) \le \E f_{a_n}(\norm{M_{n-1}}).
  \]
  By Lemma~\ref{lem:mixture_simple}, the function $f_{a_n}$ is an even convex function whose second derivative $f_{a_n}''$ is convex. Therefore, applying the induction hypothesis to $f_{a_n}$ on the martingale $(M_k)_{k=0}^{n-1}$, we obtain
  \[
    \E f_{a_n}(\norm{M_{n-1}}) \le \E f_{a_n}\left( \sum_{i = 1}^{n-1} a_i \xi_i \right) = \E f\left( \sum_{i = 1}^n a_i \xi_i \right),
  \]
  which completes the proof.
\end{proof}

\section{Proofs}

To reduce the proof of Theorems~\ref{thm:subgaussian_tail} and~\ref{thm:largedeviation_tail} from an arbitrary Euclidean space to $\R^2$, we rely on the following reduction theorem of Kallenberg and Sztencel \cite{kallenberg}.

\begin{lemma}[\citeref{kallenberg}{Theorem~3.1}]
\label{lem:KS}
Let $M = (M_k)_{k=0}^n$ be a martingale taking values in an Euclidean space $E$ with increments $X_k = M_k - M_{k-1}$. Then there exists a $\R^2$-valued martingale $N = (N_k)_{k=0}^n$ with increments $Y_k = N_k - N_{k-1}$ such that
\[
  (\|N_0\|, \|N_1\|, \dots, \|N_n\|) \overset{d}{=} (\|M_0\|, \|M_1\|, \dots, \|M_n\|)
\]
and $(\|Y_1\|, \dots, \|Y_n\|) \overset{d}{=} (\|X_1\|, \dots, \|X_n\|)$.
\end{lemma}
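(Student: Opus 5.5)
The plan is to build $N$ on an enlarged probability space, as an explicit function of the path of $M$ together with an independent sequence of random signs. Then the norm identities hold pointwise, not merely in distribution, and the martingale property comes from the independence of the signs. Let $\varepsilon_1,\dots,\varepsilon_n$ be i.i.d.\ Rademacher variables independent of $M$. Let $\mathcal F_k$ be the filtration of $M$, and set $\mathcal G_k = \sigma(\mathcal F_k,\varepsilon_1,\dots,\varepsilon_k)$. Because the signs are independent of $M$, we have $\E[X_k \mid \mathcal G_{k-1}] = \E[X_k\mid \mathcal F_{k-1}] = 0$. Write $e_1=(1,0)$ and $e_2=(0,1)$ for the standard basis of $\R^2$.

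\textbf{Construction.} Put $N_0 = 0$ and define $N_k$ recursively, maintaining the invariant that $N_{k-1} = \norm{M_{k-1}}\, e_1$ after a rotation. More conveniently, define $N_k$ only up to its norm. At step $k$, let $u_{k-1} = M_{k-1}/\norm{M_{k-1}}$ if $M_{k-1}\neq 0$, and let $u_{k-1}$ be a fixed unit vector otherwise; this choice is $\mathcal F_{k-1}$-measurable. Decompose $X_k = p_k u_{k-1} + w_k$ with $w_k \perp u_{k-1}$. Similarly, let $e_{k-1}$ be the unit vector of $N_{k-1}$ (or $e_1$ if $N_{k-1} = 0$), and let $e_{k-1}^\perp$ be its rotation by $90^\circ$. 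Define
\[
  Y_k = p_k\, e_{k-1} + \varepsilon_k \norm{w_k}\, e_{k-1}^\perp, \qquad N_k = N_{k-1}+Y_k .
\]

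\textbf{Verification.} First, $\norm{Y_k}^2 = p_k^2 + \norm{w_k}^2 = \norm{X_k}^2$. Second, the statement $\norm{N_k} = \norm{M_k}$ follows by induction. If it holds at $k-1$, then $N_{k-1} = \norm{M_{k-1}} e_{k-1}$, and in both the zero and nonzero cases
\[
  \norm{N_k}^2 = (\norm{M_{k-1}}+p_k)^2 + \norm{w_k}^2 = \norm{M_{k-1}+X_k}^2 .
\]
So the norm vectors coincide almost surely, which gives equality in law. Third, for the martingale property with respect to $(\mathcal G_k)$, note that $e_{k-1}$ and $e_{k-1}^\perp$ are $\mathcal G_{k-1}$-measurable, and $p_k = \langle X_k, u_{k-1}\rangle$ with $u_{k-1}$ being $\mathcal G_{k-1}$-measurable. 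Hence $\E[p_k\mid\mathcal G_{k-1}] = 0$. Moreover, $\varepsilon_k$ is independent of $\sigma(\mathcal G_{k-1}, X_k)$, so $\E[\varepsilon_k\norm{w_k}\mid \mathcal G_{k-1}] = 0$. It follows that $\E[Y_k\mid \mathcal G_{k-1}]=0$. Integrability is inherited from $\norm{Y_k}=\norm{X_k}$.

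\textbf{Main obstacle.} The delicate point is the orthogonal part $w_k$. It has conditional mean zero in the $(d-1)$-dimensional complement, but collapsing it to a single direction would destroy that mean-zero property. The random sign $\varepsilon_k$ is exactly what restores it without changing any norms. Beyond this, only routine care is needed: the measurable choice of directions when $M_{k-1}=0$ or $N_{k-1}=0$, and checking that the enlarged filtration does not spoil the martingale property of $M$, which independence handles.
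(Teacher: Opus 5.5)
Your argument is correct. The paper gives no proof of this lemma; it simply invokes Kallenberg--Sztencel, Theorem~3.1. So your route is necessarily different: you give a self-contained, elementary construction for the discrete-time case.

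On the product extension carrying the independent signs $\varepsilon_k$, the identities $\|N_k\|=\|M_k\|$ and $\|Y_k\|=\|X_k\|$ hold pointwise. This is stronger than the two separate equalities in law being asserted, since you get equality of the joint law of all norms and increment norms. The martingale property reduces to $\E[\langle X_k,u_{k-1}\rangle\mid\mathcal G_{k-1}]=0$ and $\E[\varepsilon_k\|w_k\|\mid\sigma(\mathcal G_{k-1},X_k)]=0$. You justify both correctly via the independence of the signs from $\mathcal F_n$.

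One small repair is needed. The lemma does not assume $M_0=0$, so choosing $N_0=0$ breaks the base of your induction whenever $M_0\neq 0$. Take $N_0=\|M_0\|\,e_1$ instead; it is $\mathcal G_0=\mathcal F_0$-measurable and integrable, and the rest goes through verbatim. You should also rename the unit vector of $N_{k-1}$, since your $e_{k-1}$ collides with the basis vector $e_1$ when $k=2$.

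As for the trade-off, the citation keeps the paper short. Your proof makes the reduction transparent: in discrete time, passing to $\R^2$ costs only one extra independent random sign per step, in line with the paper's remark that the reduction is not essential. Moreover, because $\|Y_i\|=\|X_i\|$ pointwise in your construction, the bounds $\|Y_i\|\le a_i$ a.s. and the identity $\|N_n\|\overset{d}{=}\|M_n\|$ used right after the lemma are immediate.
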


In particular, if the increments of $M$ satisfy $\|X_i\| \le a_i$ almost surely for $i=1,\ldots,n$, then the increments of $N$ satisfy the same bounds. Moreover, $\|M_n\|$ and $\|N_n\|$ have the same distribution. Consequently, any tail bound for $\|M_n\|$ depending only on the increment bounds $a_1,\ldots,a_n$ holds in every Euclidean space if it holds in $\R^2$.

\begin{lemma}[\cite{Pinelis1994Hotelling}]
  \label{lem:comparison}
  Let $f:\mathbb{R}\to\mathbb{R}$ be an even, twice continuously differentiable function whose second derivative $f''$ is convex. Let $S_n=\sum_{i=1}^n a_i\xi_i$, where $\xi_1,\ldots,\xi_n$ are independent Rademacher random variables, and suppose that $\sigma_n^2=\sum_{i=1}^n a_i^2>0$. Then
  $$ \E f(S_n) \le \E f(Z), $$
  where $Z$ is a centered Gaussian random variable with variance $\sigma_n^2$.
\end{lemma}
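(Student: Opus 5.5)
Induction on $n$ is the natural plan: replace one Rademacher summand at a time by an independent Gaussian of the same variance. Let $Z = a_1\eta_1+\dots+a_n\eta_n$ with $\eta_i$ i.i.d.\ standard normal, independent of the $\xi_i$; then $Z\sim N(0,\sigma_n^2)$. It suffices to prove the one-step inequality
\[
  \E f(t + a\xi) \le \E f(t + a\eta), \qquad t\in\R,\ a \ge 0,
\]
where $\xi$ is Rademacher, $\eta$ is standard normal, and $f$ is as in the statement.

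To pass from the one-step inequality to the lemma, condition on all variables except the $i$-th and replace $a_i\xi_i$ by $a_i\eta_i$ for $i=1,\dots,n$ in turn. Each conditional expectation has the form $\E f(t + a_i\xi_i)$, where $t$ is the sum of the other, already fixed, terms. By the one-step inequality it increases when $\xi_i$ is replaced by $\eta_i$. No symmetry of the shift is needed, since the inequality is pointwise in $t$. If one prefers to stay within the class of functions of Lemma~\ref{lem:mixture_simple}, note that $t\mapsto \E f(t+c\eta)$ is also even with convex second derivative, being a mixture of translates. Integrability is not an issue: $f''$ is convex, hence bounded above by an affine function of $|t|$ on rays, so $f$ grows at most like $|t|^3$ and all Gaussian expectations are finite.

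For the one-step inequality, fix $t$ and set $\phi(s) = \tfrac12[f(t+s)+f(t-s)]$ for $s\ge0$. This is an even function of $s$, and we must show $\phi(a) \le \E\phi(a|\eta|)$. Write $\phi(s) = \psi(s^2)$. The key claim is that $\psi$ is convex on $[0,\infty)$. Given that, Jensen's inequality with $\E(a\eta)^2 = a^2$ yields $\psi(a^2) \le \E\psi(a^2\eta^2)$, which is exactly the desired inequality.

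Convexity of $\psi$ is the main obstacle. We compute $\psi'(u) = \phi'(s)/(2s)$ with $u=s^2$, so $\psi$ is convex if and only if $\phi'(s)/s$ is increasing on $(0,\infty)$. This is precisely the conclusion of Lemma~\ref{lem:increasing} applied to $\phi$, viewed as a function of $s\in\R$. The function $\phi$ is even, is $C^2$, and has second derivative $\phi''(s) = \tfrac12[f''(t+s)+f''(t-s)]$, which is convex in $s$ as an average of convex functions. Evenness of $f$ is not needed at this step, since $\phi$ is automatically even in $s$. Lemma~\ref{lem:increasing} only uses that $\phi$ is even, $C^2$, and has convex second derivative, so it applies and gives the monotonicity of $\phi'(s)/s$. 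Hence $\psi$ is convex, the one-step inequality holds, and the induction completes the proof.
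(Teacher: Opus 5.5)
Your proof is correct in its main line, and it takes a genuinely different route from the paper. The paper gives no argument of its own: it rescales to $\sigma_n^2=1$ and quotes Corollary~2.7 of Pinelis. You instead give a self-contained Lindeberg-type replacement argument. Its one-step inequality $\E f(t+a\xi)\le\E f(t+a\eta)$ follows from Jensen once $u\mapsto\phi(\sqrt u)$ is convex, and that convexity is exactly Lemma~\ref{lem:increasing} applied to the symmetrization $\phi$. This is the same monotonicity of $f'(t)/t$ that drives Lemma~\ref{lem:single}, so both comparison steps of the paper rest on one elementary fact. Your route also needs no rescaling and no assumption $\sigma_n^2>0$. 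Since Jensen only uses $\E(a\eta)^2=a^2$, it shows more: $a\eta$ may be replaced by any symmetric $Y$ with $\E Y^2=a^2$, so $S_n$ is dominated by every sum of independent symmetric variables with variances $a_i^2$. The paper's route is merely shorter.

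One justification, however, is false and must be replaced. A convex function is bounded \emph{below}, not above, by affine functions, so convexity of $f''$ gives no growth bound. For instance, $f=\cosh$ belongs to the class (the paper uses $2\cosh(\lambda t)$ in the proof of Theorem~\ref{thm:largedeviation_tail}), and for $f''(t)=\e^{t^4}$ one even has $\E f(Z)=+\infty$.

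The fix is to use a lower bound instead. Since $f''$ is even and convex, $f''\ge f''(0)$, hence $f\ge q$ with $q(s)=f(0)+\tfrac12 f''(0)s^2$. Replace $f$ by $f-q$. This function satisfies the same hypotheses (its second derivative is $f''-f''(0)$), and it changes both sides by the same amount because $\E S_n^2=\E Z^2$. So you may assume $f\ge 0$. Then:
\begin{itemize}
\item every expectation lies in $[0,\infty]$;
\item the conditioning in the replacement step is justified by Tonelli;
\item Jensen holds in this extended sense (integrate a supporting line of $\psi$ at $a^2$);
\item the case $\E f(Z)=+\infty$ is trivial.
\end{itemize}
Your optional remark that $t\mapsto\E f(t+c\eta)$ stays in the class also presupposes finiteness and differentiation under the integral. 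It is better dropped in favour of the direct replacement argument.
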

\begin{proof}
	Since $\sigma_n^2>0$ and the class of even functions with a finite convex second derivative is closed under rescaling of the argument, we may assume without loss of generality that $\sigma_n^2 = 1$. In this case the lemma is precisely \citeref{Pinelis1994Hotelling}{Corollary 2.7}.	
\end{proof}
%\begin{lemma}[Tomas' version]\label{comparison}
%Let $f$ be a convex function with a finite convex second derivative $f''$. Then for a linear combination $S_n=r_{1}\xi_{1}+\cdots+r_{n}\xi_{n}$ with $\sum r_{i}^{2}=1$ we have
%$$\E f(S_n)\leq \E f(Z),$$
%where $Z$ is a standard Gaussian random variable.\\
%Furthermore, for the particular choice $f_{x}(t)=(|t|-h)_{+}^{3}/(x-h)^3$ we have
%$$\inf_{h<x}\E f_{x}(S_n)\leq \frac{2e^3}{9}\Pr(|Z|\geq x).$$
%\end{lemma}
The following standard inequality is a special case ($\alpha = 3$) of \citeref{BKZ}{Lemma~1.1}
\begin{lemma}
	\label{lem:inversion_Gaussian}
For every real number $x$ and an arbitrary Gaussian variable $Z$ we have
    \[
    \inf_{h < x} \frac{\E (Z-h)_+^3}{(x-h)^3} \le \frac{2 \e^3}{9} \Pr\left(Z \ge x\right).
    \]
\end{lemma}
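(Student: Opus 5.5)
The approach is to compare the Gaussian tail with an exponential tail via log-concavity, and then optimize over $h$ explicitly. The constant $\frac{2\e^3}{9}=\frac{6\e^3}{27}$ is what comes out of minimizing $6\e^{y}/y^3$ over $y>0$, which suggests this is the right route.

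\textbf{Degenerate case.} First dispose of $Z\equiv\mu$ (zero variance).
\begin{itemize}
\item If $x>\mu$, take $h=\mu$. The numerator $\E(Z-h)_+^3=0$, so the infimum is $0$.
\item If $x\le\mu$, the right-hand side equals $\frac{2\e^3}{9}>1$. Meanwhile $(\mu-h)^3/(x-h)^3\to1$ as $h\to-\infty$, so the infimum is at most $1$.
\end{itemize}
In the nondegenerate case $Z\sim N(\mu,\sigma^2)$ with $\sigma>0$, both sides are invariant under the affine change $Z\mapsto(Z-\mu)/\sigma$, $x\mapsto(x-\mu)/\sigma$, $h\mapsto(h-\mu)/\sigma$. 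This is because the ratio $\E(Z-h)_+^3/(x-h)^3$ is homogeneous of degree $0$. So we may assume $Z$ is standard normal.

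\textbf{Main step.} Let $G(t)=\Pr(Z\ge t)$ denote the standard normal tail.
\begin{enumerate}
\item By the layer-cake formula,
\[
\E(Z-h)_+^3=\int_h^\infty 3(t-h)^2G(t)\,dt .
\]
\item The function $\ln G$ is concave, since the Gaussian density is log-concave and tails of log-concave densities are log-concave (Prékopa). It is differentiable, with $-(\ln G)'(x)=m:=\varphi(x)/G(x)>0$.
\item The tangent-line bound for a concave function then gives, for every real $t$ (not only $t\ge x$),
\[
G(t)\le G(x)\,\e^{-m(t-x)} .
\]
\item Inserting this into the integral and computing the resulting elementary integral gives
\[
\E(Z-h)_+^3\le G(x)\,\e^{m(x-h)}\int_0^\infty 3s^2\e^{-ms}\,ds=G(x)\,\e^{m(x-h)}\frac{6}{m^3}.
\]
\end{enumerate}

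\textbf{Optimization.} Dividing by $(x-h)^3$ and writing $y=m(x-h)>0$, we get
\[
\frac{\E(Z-h)_+^3}{(x-h)^3}\le G(x)\cdot\frac{6\e^{y}}{y^3}.
\]
Choose $h=x-3/m$, i.e.\ $y=3$, which minimizes $6\e^y/y^3$. This yields the bound $\frac{6\e^3}{27}G(x)=\frac{2\e^3}{9}\Pr(Z\ge x)$, as claimed.

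\textbf{Main obstacle.} The only nontrivial input is the global exponential domination in step 3. It must hold for $t<x$ as well as $t\ge x$, and that is exactly what concavity of $\ln G$ supplies. If one prefers to avoid citing Prékopa, this concavity can be checked directly: it is equivalent to $m(t)=\varphi(t)/G(t)$ being nondecreasing, which is the classical monotonicity of the inverse Mills ratio.
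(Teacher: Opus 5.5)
Your proof is correct and complete: log-concavity of the Gaussian tail gives the global bound $G(t)\le G(x)\e^{-m(t-x)}$, and the layer-cake formula with the choice $h=x-3/m$ then yields exactly $\frac{6\e^3}{27}G(x)=\frac{2\e^3}{9}\Pr(Z\ge x)$; your handling of the degenerate case goes beyond what the paper addresses. The paper gives no proof of its own but cites the case $\alpha=3$ of \citeref{BKZ}{Lemma~1.1}, a general-$\alpha$ bound whose constant at $\alpha=3$ equals $\Gamma(\alpha+1)(\e/\alpha)^\alpha$, and your exponential-domination argument is the standard route to that constant, specialized to $\alpha=3$.
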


\begin{proof}[Proof of Theorem~\ref{thm:subgaussian_tail}]
	In view of Lemma~\ref{lem:KS}, we can assume that martingale $M$ is $\R^2$-valued. For any $h < x$, define the function $f_h(t) = (t-h)_+^3 + (-t-h)_+^3$. Specifically, for $t \ge x$, we have $f_h(t) \ge (x-h)^3$, so
\[
  \Pr(\|M_n\| \ge x) = \E \mathbb{I}_{\{\|M_n\| \ge x\}} \le \frac{\E f_h(\|M_n\|)}{(x-h)^3}.
\]

The function $f_h$ is even and has derivative $f_h'(t) = 3(t-h)_+^2 - 3(-t-h)_+^2$. Its second derivative, $f_h''(t) = 6(t-h)_+ + 6(-t-h)_+$, is the sum of two convex functions and is therefore convex on $\mathbb{R}$. This allows us to apply Theorem~\ref{thm:moments} (assuming the martingale differences are indexed as $X_i = M_i - M_{i-1}$), yielding
\[
  \E f_h(\|M_n\|) \le \E f_h\left( S_n \right),
\]
where $S_n = \sum_{i=1}^n a_i \xi_i$ with $\xi_i$ being independent Rademacher random variables.

 Lemma~\ref{lem:comparison} gives
\[
  \E f_h(S_n) \le \E f_h(Z_n),
\]
where $Z_n \sim N(0, \sigma_n^2)$. 

Because the distribution of $Z_n$ is symmetric around zero, $(-Z_n-h)_+$ has the same distribution as $(Z_n-h)_+$, giving $\E f_h(Z_n) = 2 \E (Z_n-h)_+^3$. We obtain
\begin{align*}
  \Pr(\|M_n\| \ge x) &\le \inf_{h < x} \frac{\E f_h(Z_n)}{(x-h)^3} \\
  &= 2\inf_{h < x} \frac{\E (Z_n - h)_+^3}{(x - h)^3}.
\end{align*}

Applying Lemma~\ref{lem:inversion_Gaussian}, this is bounded by
\[
	2 \left( \frac{2\e^3}{9} \Pr(Z_n \ge x) \right) = \frac{2\e^3}{9} \Pr(|Z_n| \ge x),
\]
completing the proof.
\end{proof}
%Tomo juodraštis:
%\begin{proof}[Tomas' proof of Theorem~\ref{thm:subgaussian_tail}]
%The function $f_{x}(t)=(|t|-h)_{+}^{3}/(x-h)^3$ is convex, even and $f''\geq 0$. We have $\mathbb{I}\{|t|\geq x\}\leq f_{x}(|t|).$ Replacing $|t|$ by $||M_n||$, taking the expectation in the latter inequality and using Theorem \ref{thm:moments} we get
%$$\mathbb{P}(||M_n||\geq x)\leq \E f_{x}(||M_n||)\leq \E f_{x}(r_1\xi_{1}+\cdots+r_n\xi_{n}).$$
%We now use the second conclusion of Lemma \ref{lem:comparison} and obtain
%$$\Pr(||M_n||\geq x)\leq \frac{2e^3}{9}\Pr(|Z|\geq x).$$
%\end{proof}
%
%\begin{proof}[Proof of Theorem~\ref{thm:largedeviation_tail}]
%Much like in the proof of Theorem \ref{thm:subgaussian_tail} we start with the inequality
%$$\Pr(||M_n||\geq x)$$
%\end{proof}
\begin{proof}[Proof of Theorem~\ref{thm:largedeviation_tail}]
By Lemma~\ref{lem:KS}, we can assume without loss of generality that $M_n$ is an $\R^2$-valued martingale with increments $X_i = M_i - M_{i-1}$ satisfying $\|X_i\| \le a_i$ a.s.

For any $\lambda > 0$, consider the function $f(t) = 2 \cosh(\lambda t) = \e^{\lambda t} + \e^{-\lambda t}$. Since $f$ is even and $f''(t) = 2\lambda^2 \cosh(\lambda t)$ is convex, Theorem~\ref{thm:moments} implies
\begin{equation}
	\label{eq:cosh}
  \E \cosh(\lambda \|M_n\|) \le \E \cosh\left( \lambda \sum_{i=1}^n a_i \xi_i \right) = \prod_{i=1}^n \cosh(\lambda a_i),
\end{equation}
where $\xi_1, \dots, \xi_n$ are independent Rademacher random variables. By Corollary 1 and Example 2 of \cite{Eaton1970}, this product, subject to the constraint $\sum_{i=1}^n a_i^2 = n a^2$, is maximized when all coefficients are equal to the averaged parameter $a = \sqrt{\frac{1}{n}\sum_{i=1}^n a_i^2}$:
\[
  \prod_{i=1}^n \cosh(\lambda a_i) \le \cosh^n(\lambda a).
\]

Taking expectations on the sides of the inequality $\mathbb{I}_{\{\|M_n\| \ge x\}} \le 2\e^{-\lambda x} \cosh(\lambda \|M_n\|)$,  we obtain the exponential moment bound
\[
  \Pr(\|M_n\| \ge x) \le 2 \exp\left( -\left(\lambda x - n \ln \cosh(\lambda a)\right) \right).
\]

Optimizing over $\lambda > 0$ yields the Cramér transform (a.k.a. Fenchel-Legendre dual) of the logarithmic moment generating function for symmetric Bernoulli/Rademacher random variables. Following the standard derivation in \citeref{Boucheron2013}{Section 2.2}, we substitute $u = \frac{x}{na} \in [0, 1]$ to find
\[
  \sup_{\lambda > 0} \left\{ \lambda x - n \ln \cosh(\lambda a) \right\} = {n} h(u),
\]
where $h(u) = [(1+u)\ln(1+u) + (1-u)\ln(1-u)]/2$. For $0<x<na$, the supremum is attained at $\lambda=a^{-1}\operatorname{arctanh}(x/(na))$. For $x=na$, the asserted value follows by letting $\lambda\to\infty$, and the case $x=0$ is immediate. This concludes the proof.
\end{proof}

%Finally, we recover Pinelis' inequality in \eqref{eq:Pinelis} as follows. Note that if $f(t)=cosh(t)$ then
\medskip
Finally, we recover Pinelis' inequality in \eqref{eq:Pinelis} from \eqref{eq:cosh} as follows.
 Using the standard inequality $\cosh(u) \le \e^{u^2/2}$ for all $u \in \R$, we bound the product by
\[
  \prod_{i=1}^n \cosh(\lambda a_i) \le \exp\left( \frac{\lambda^2}{2} \sum_{i=1}^n a_i^2 \right).
\]
To bound the tail probability, we observe that for any $t \ge x$, the indicator function satisfies $\mathbb{I}_{\{t \ge x\}} \le \e^{\lambda(t - x)} \le 2 \e^{-\lambda x} \cosh(\lambda t)$. Applying this to $\|M_n\|$ and taking expectations yields
\[
  \Pr(\|M_n\| \ge x) \le 2 \e^{-\lambda x} \E \cosh(\lambda \|M_n\|) \le 2 \exp\left( -\lambda x + \frac{\lambda^2}{2} \sum_{i=1}^n a_i^2 \right).
\]
Assuming that $\sum_{i=1}^n a_i^2>0$, for $x>0$ we optimize this bound over $\lambda>0$ by choosing $\lambda=x/\sum_{i=1}^n a_i^2$, obtaining
\[
  \Pr(\|M_n\| \ge x) \le 2 \e^{-\frac{x^2}{2\sum_{i=1}^n a_i^2}},
\]
which is Pinelis' inequality. The case $x=0$ is immediate.

\section{Acknowledgements}
The questions and proofs were human generated. AI was used for proofreading. Supported (partly) by the long-term strategic development financing of the Institute of Computer Science (RVO: 67985807).

\bibliographystyle{plain}
\bibliography{norm_tails}
\end{document}